\def \c{\mathbb{C}}
\def \z{\mathbb{Z}}
\def \r{\mathbb{R}}
\def \n{\mathbb{N}}
\def \p{\mathbb{P}}
\def \k{{\bf k}}
\def \.{\cdot}
\def \Reg{\textup{Reg}}
\def \Vol{\textup{Vol}}
\def \Sym{\textup{Sym}}
\def \GL{\textup{GL}}
\def \Reg{\textup{Reg}}
\def \End{\textup{End}}
\def \conv{\textup{conv}}
\def \GL{\textup{GL}}
\def \SL{\textup{SL}}
\theoremstyle{plain}
\newtheorem{Th}{Theorem}[section]
\newtheorem{Lem}[Th]{Lemma}
\newtheorem{Prop}[Th]{Proposition}
\newtheorem{Cor}[Th]{Corollary}
\theoremstyle{definition}
\newtheorem{Ex}[Th]{Example}
\newtheorem{Def}[Th]{Definition}
\newtheorem{Rem}[Th]{Remark}
\begin{document}
\title[Asymptotic enumeration of highest weights in tensor powers]
{A remark on asymptotic enumeration of highest weights in tensor powers of a representation}
\author{Kiumars Kaveh}
\address{Department of Mathematics, School of Arts and Sciences, University of Pittsburgh, 
301 Thackeray Hall, Pittsburgh, PA  15260, U.S.A.}
\email{kaveh@pitt.edu} 

\begin{abstract}
We consider the semigroup $S$ of highest weights appearing in tensor powers $V^{\otimes k}$ of
a finite dimensional representation $V$ of a connected reductive group. We describe the cone generated by $S$
as the cone over the weight polytope of $V$ intersected with the positive Weyl chamber.
From this we get a description for the asymptotic of the number of highest weights appearing in $V^{\otimes k}$ in terms of the volume of this polytope.
\end{abstract}

\keywords{Reductive group representation,
tensor power, semigroup of integral points, weight polytope, moment polytope.}

\subjclass[2010]{Primary: 05E10; Secondary: 20G05}

\maketitle


\section{Introduction}
Let $G$ be a connected reductive algebraic group over an algebraically closed field $\k$ of characteristic $0$, and
let $V$ be a finite dimensional $G$-module. We consider the semigroup of dominant weights
$$S = S(V) = \{(k, \lambda) \mid V_\lambda \text{ appears in } V^{\otimes k}\},$$
where $V_\lambda$ is the irreducible representation with highest weight $\lambda$.
In this note we describe the cone $C(S)$ of this semigroup, i.e. the smallest closed convex cone (with apex at the
origin) containing $S$ (in other words, the closure of the convex hull of $S \cup \{0\}$). We use this to
describe the asymptotic of the number of highest weights $\lambda$ appearing in $V^{\otimes k}$.

This work is in the spirit of the general theory of semigroups of integral points and Newton-Okounkov bodies
developed in \cite{Askold-Kiumars-Newton-Okounkov} and \cite{Askold-Kiumars-affine}.

Let $A$ denote the finite set of highest weights in $V$, i.e. the dominant weights $\lambda$ where
$V_\lambda$ appears in $V$. Consider the union of all the Weyl group orbits of $\lambda \in A$ and let
$P^+(V)$ be its convex hull intersected with the positive Weyl chamber.
We show that the slice of the cone $C(S)$ at $k=1$ coincides with
the polytope $P^+(V)$ (Theorem \ref{th-main}). The main tool in the proof will be the PRV theorem on the 
tensor product of irreducible representations. Beside the PRV theorem The rest of arguments are elementary in nature.

Let $H_V(k)$ denote the number of dominant weights $\lambda$ where $V_\lambda$ appears in $V^{\otimes k}$.
From general statements about semigroups of integral points we then conclude that
$H_V(k)$ grows of degree $q = \dim(P^+(V))$, i.e. the limit
$$a_q = \lim_{k \to \infty} H_V(k)/k^q$$ exists and is non-zero. In addition $a_q$ is equal to the
(properly normalized) volume of the polytope $P^+(V)$.

In the last section we discuss the connection between the semigroup $S(V)$, its associated polytope
$P^+(V)$ and the moment polytope of $G$-varieties.

At the end, we would like to mention the related paper of Tate and Zeldtich \cite{Zelditch} in 
which the authors address the (more difficult and independent) question of describing 
the asymptotic behavior of multiplicities of irreducible representations
appearing in tensor powers of an irreducible representation.
This note is also related to \cite{KKh-Kazarnovskii} (we should point out that 
the first version of this note appeared in arXiv before \cite{KKh-Kazarnovskii}).

To make this note accessible to a wider range of audience we have tried to cover most of the background material.
\\

\noindent{\bf Notation:} Throughout the paper we will use the following notation. $G$ denotes
a connected reductive algebraic group over an algebraically closed field $\k$ of characteristic $0$.
\begin{itemize}
\item[-] We fix a Borel subgroup $B$ and a maximal torus $T$ in $G$. 
The Weyl group of $(G, T)$ is denoted by $W$. It contains a unique longest element denoted by $w_0$.
\item[-] $\Lambda$ denotes the weight lattice of $G$
(that is, the character group of $T$), and $\Lambda^+$ is the subset
of dominant weights (for the choice of $B$). Put $\Lambda_\r = \Lambda \otimes_{\z} \r$.
Then the convex cone generated by $\Lambda^+$ in $\Lambda_\r$ is the
positive Weyl chamber $\Lambda^+_{\r}$.
\item[-] For a weight $\lambda \in
\Lambda$, the irreducible $G$-module corresponding to $\lambda$ will
be denoted by $V_\lambda$ and a highest weight vector in $V_\lambda$ will
be denoted by $v_\lambda$. Finally for a dominant weight $\lambda$, we put
$\lambda^* = -w_0(\lambda)$ which is again a dominant weight. One has
$V_{\lambda^*} \cong V^*_\lambda$ as $G$-modules.
\end{itemize}

\section{Semigroups of integral points and convex bodies} \label{sec-semigroup}
Let $S \subset \n \times \z^n$ be a semigroup of integral points (i.e. $S$ is closed under addition).

Let $C(S)$ be the smallest closed convex cone (with apex at the origin) containing $S$. 
Also let $G(S)$ be the subgroup of $\z^{n+1}$ generated by $S$
and, $L(S)$ the linear subspace of $\r^{n+1}$ spanned by $S$. The sets $C(S)$ and $G(S)$ lie in $L(S)$.
To $S$ we associate its {\it regularization} which is the semigroup $\Reg(S) = C(S) \cap G(S)$.
The regularization $\Reg(S)$ is a simpler semigroup with more points which contains the semigroup
$S$. In \cite[Section 1.1]{Askold-Kiumars-Newton-Okounkov} it is proved that
{\it the regularization $\Reg(S)$ asymptotically approximates the semigroup $S$.}
More precisely:
\begin{Th}[Approximation Theorem] \label{th-approx}
Let $C' \subset C(S)$ be a strongly convex cone which intersects
the boundary (in the topology of the linear space $L(S)$) of the cone $C(S)$
only at the origin. Then there exists a constant $N>0$ (depending on $C'$)
such that each point in $G(S) \cap C'$ whose distance from the origin is bigger than $N$ belongs to $S$.
\end{Th}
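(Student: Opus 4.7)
After passing to the linear span, assume $L(S) = \r^d$ and $G(S)$ is a full-rank lattice in $\r^d$. The plan is to write each $x \in G(S) \cap C'$ of sufficiently large norm explicitly as a non-negative integer combination of a fixed finite list of elements of $S$. First, cover the compact set $C' \cap \mathbb{S}^{d-1}$, which lies in the relative interior of $C(S)$ by hypothesis, by finitely many closed sets $K_1, \ldots, K_L$, each contained in the (open) interior of a $d$-dimensional simplicial sub-cone $\sigma_l = \r_{\geq 0}\langle s_{l,1}, \ldots, s_{l,d}\rangle \subset C(S)$ whose vertices $s_{l,j} \in S$ form an $\r$-basis of $\r^d$; such a simplicial sub-cone exists around any interior point of $C(S)$ by a Carath\'eodory-type reduction. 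Set $T_l = \{s_{l,1}, \ldots, s_{l,d}\}$ and $\Lambda_l = \z T_l$.

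\emph{Coset representatives inside $S$.} The quotient $G(S)/\Lambda_l$ is a finite group, and the image of $S$ in it is a sub-semigroup. Because any sub-semigroup of a finite group is automatically a subgroup (the powers of any element must eventually cycle, yielding inverses), and because $S$ generates $G(S)$ by definition, this image is all of $G(S)/\Lambda_l$. Hence I can pick coset representatives $a_{l,1}, \ldots, a_{l,N_l} \in S$ for every $l$. Let $A$ be an upper bound for $|\alpha_j^{(l)}(a_{l,k})|$ taken over all $l,k,j$, where $\alpha_j^{(l)}: \r^d \to \r$ denotes the $j$-th coordinate in the basis $T_l$.

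\emph{Conclusion.} On each compact set $K_l \subset \sigma_l^\circ$ the functionals $\alpha_j^{(l)}$ are continuous and strictly positive, so by compactness they admit a uniform lower bound $c > 0$ with $\alpha_j^{(l)}(x) \geq c\|x\|$ for every $x \in \r_{\geq 0} \cdot K_l$. Given $x \in G(S) \cap C'$ with $\|x\| \geq N := A/c$, choose $l$ with $x/\|x\| \in K_l$ and $k$ with $x \equiv a_{l,k} \pmod{\Lambda_l}$. Then $x - a_{l,k} \in \Lambda_l$, so $x - a_{l,k} = \sum_j c_j s_{l,j}$ with \emph{integer} coefficients $c_j = \alpha_j^{(l)}(x) - \alpha_j^{(l)}(a_{l,k}) \geq c\|x\| - A \geq 0$. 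Therefore $x = a_{l,k} + \sum_j c_j s_{l,j} \in S$.

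\emph{Main obstacle.} The crucial step is the choice of coset representatives: because $S$ is a sub-semigroup of the finitely-generated abelian group $G(S)$, its image in every finite quotient $G(S)/\Lambda_l$ is automatically a subgroup, so representatives can be taken inside $S$ itself with bounded basis coordinates. Combined with the uniform lower bound $\alpha_j^{(l)}(x) \geq c\|x\|$ forced by the hypothesis that $C'$ meets $\partial C(S)$ only at the origin, this turns a bounded integer residue correction into a non-negative one as soon as $\|x\| \geq A/c$.
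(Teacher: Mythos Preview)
The paper does not prove this theorem itself; it is quoted from \cite[Section~1.1]{Askold-Kiumars-Newton-Okounkov}. Your argument is correct and in fact reproduces essentially the proof found there: cover the compact slice $C'\cap\mathbb{S}^{d-1}$ by interiors of finitely many simplicial sub-cones with generators in $S$, choose coset representatives in $S$ for each finite quotient $G(S)/\Lambda_l$ (using that a sub-semigroup of a finite abelian group is a subgroup, together with the fact that $S$ generates $G(S)$), and then absorb the bounded residue correction using the uniform positive lower bound on the coordinate functionals over the compact pieces.

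One point deserves a word more of justification. The existence, for every interior point $p$ of $C(S)$, of linearly independent $s_{1},\ldots,s_{d}\in S$ with $p\in\textup{int}\bigl(\r_{\ge0}\langle s_{1},\ldots,s_{d}\rangle\bigr)$ does not follow from a Carath\'eodory-type argument alone: for a mere finite set $X$ one can have $\textup{cone}(X)=\r^{d}$ while some interior point lies on a wall of \emph{every} simplicial sub-cone with generators in $X$ (for instance $X=\{e_{1},e_{2},e_{3},-e_{1}-e_{2}-e_{3}\}$ in $\r^{3}$ and $p=(-1,-1,2)$). What saves you is that $S$ is a \emph{semigroup}: non-negative integer combinations of elements of $S$ lie again in $S$, so the directions of elements of $S$ are dense in every finitely generated sub-cone of $C(S)$, and one can perturb the Carath\'eodory generators within $S$ to make $p$ strictly interior. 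With this remark your proof is complete.
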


Let $\pi: \r \times \r^n \to \r$ denote the projection on the first factor.
We call a semigroup $S \subset \n \times \z^n$ a {\it non-negative semigroup}
if it is not contained in the hyperplane $\pi^{-1}(0)$.
If in addition the cone $C(S)$ intersects the hyperplane
$\pi^{-1}(0)$ only at the origin,
$S$ is called a {\it strongly non-negative semigroup}.

Let $S_k = S \cap \pi^{-1}(k)$ be the set of points in $S$ at level $k$.
For simplicity throughout this section we assume that $S_1 \neq \emptyset$.


We denote the group $G(S) \cap \pi^{-1}(0)$ by $\Lambda(S)$ and call it the
{\it lattice associated to the non-negative semigroup $S$}.
Finally, the number of points in $S_k$ is denoted by $H_S(k)$. $H_S$ is called the
{\it Hilbert function of the semigroup $S$}.

\begin{Def}[Newton-Okounkov convex set]
We call the projection of the convex set $C(S) \cap \pi^{-1}(1)$ to $\r^n$ (under the
projection on the second factor $(1, x) \mapsto x$),
the {\it Newton-Okounkov convex set of the semigroup $S$} and denote it by $\Delta(S)$.
In other words,
$$\Delta(S) = \overline{\conv(\bigcup_{k>0} \{x/k \mid (k, x) \in S_k\})}.$$
If $S$ is strongly non-negative then $\Delta(S)$ is compact and hence a convex body.
\end{Def}


Let $\Lambda \subset \r^n$ be a lattice of full rank $n$.
Let $E \subset \r^n$ be a subspace of dimension $q$ which is rational with respect to $\Lambda$.
The {\it Lebesgue measure normalized with respect to the lattice $\Lambda$} in $E$ is the
Lebesgue measure $d\gamma$ in $E$ normalized such that the smallest measure of a $q$-dimensional
parallelepiped with vertices in $E \cap \Lambda$ is equal to $1$. The measure of a subset
$A \subset E$ will be called its {\it normalized volume}
and denoted by $\Vol_q(A)$ (whenever the lattice $\Lambda$ is clear from the context).

Let $H_S$ and $H_{\Reg(S)}$ be the Hilbert functions of $S$ and its regularization respectively.
From Theorem \ref{th-approx} it follows that
$H_S(k)$ and $H_{\Reg(S)}(k)$ have the same asymptotic as $k$ goes to infinity.
Thus the Newton-Okounkov convex set $\Delta(S)$ is responsible for the
asymptotic behavior of the Hilbert function of $S$ (\cite[Section 1.4]{Askold-Kiumars-Newton-Okounkov}):

\begin{Th} \label{th-asymp-H_S-vol-Delta}
The function $H_S(k)$ grows like $a_q k^q$ where $q$ is the dimension of the convex body $\Delta(S)$.
This means that the limit $$a_q = \lim_{k \to \infty} H_S(k)/k^q$$ exists and is non-zero. Moreover,
the {$q$-th growth coefficient $a_q$} is equal to $\Vol_q(\Delta(S))$, where the volume is normalized with respect to the lattice $\Lambda(S)$.
\end{Th}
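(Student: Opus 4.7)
The plan is to deduce Theorem~\ref{th-asymp-H_S-vol-Delta} from the Approximation Theorem~\ref{th-approx} by reducing the count of points of $S_k$ to a count of lattice points of $\Reg(S)$ in a dilated body, and then applying the classical lattice-point asymptotic for convex bodies.

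First I would compare $H_S(k)$ with $H_{\Reg(S)}(k)$. Since $S \subset \Reg(S)$, the difference $H_{\Reg(S)}(k) - H_S(k)$ counts points of $G(S) \cap C(S) \cap \pi^{-1}(k)$ that do not lie in $S$. By Theorem~\ref{th-approx}, every such missing point must lie outside every fixed strongly convex subcone $C' \subset C(S)$, beyond a bounded region. To exploit this, I would exhaust the relative interior of $C(S)$ by an increasing family of strongly convex subcones $C'_\epsilon$ whose level-$1$ slices omit a subset of $C(S)\cap\pi^{-1}(1)$ of normalized volume at most $\epsilon$. For each such $C'_\epsilon$, Theorem~\ref{th-approx} supplies a constant $N_\epsilon$ so that every point of $G(S) \cap C'_\epsilon$ at distance greater than $N_\epsilon$ from the origin belongs to $S$.

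Next I would compute $H_{\Reg(S)}(k)$ directly. The slice $C(S) \cap \pi^{-1}(k)$ projects under $(k,x)\mapsto x$ onto the dilation $k \Delta(S)$, while the fiber $G(S) \cap \pi^{-1}(k)$ projects onto a translate of the lattice $\Lambda(S)$: fixing $(1,x_0) \in S$, this translate is $kx_0 + \Lambda(S)$. Thus $H_{\Reg(S)}(k)$ equals the number of $\Lambda(S)$-points in the translated body $kx_0 + k\Delta(S)$, and the classical lattice-point count for convex bodies gives
$$H_{\Reg(S)}(k) = \Vol_q(\Delta(S))\, k^q + O(k^{q-1}),$$
where $q = \dim \Delta(S)$ and the volume is normalized with respect to $\Lambda(S)$. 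Applying the same lattice-point estimate to the complementary shell $(C(S) \setminus C'_\epsilon) \cap \pi^{-1}(k)$ yields an upper bound of the form $c\,\epsilon\, k^q + O(k^{q-1})$ for the number of missed points. Combining this with the comparison above and letting $\epsilon \to 0$ gives
$$\lim_{k\to\infty} H_S(k)/k^q = \Vol_q(\Delta(S)),$$
which is nonzero since $\Delta(S)$ is $q$-dimensional by the definition of $q$.

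The main technical obstacle I anticipate is producing the exhaustion $\{C'_\epsilon\}$ together with a uniform lattice-point bound on the complementary shell $(C(S) \setminus C'_\epsilon) \cap \pi^{-1}(k)$. This amounts to showing that the boundary of a convex body has zero Lebesgue measure and that a thin tubular region near the boundary of $C(S) \cap \pi^{-1}(1)$ contributes only $O(\epsilon k^q)$ lattice points in the level-$k$ slice. Both ingredients are standard in convex geometry but need to be assembled carefully so that the resulting error bound is uniform in $k$ and vanishes as $\epsilon \to 0$.
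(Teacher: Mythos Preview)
Your proposal is correct and follows exactly the approach the paper indicates: the paper does not prove Theorem~\ref{th-asymp-H_S-vol-Delta} in full but cites \cite[Section~1.4]{Askold-Kiumars-Newton-Okounkov}, after remarking that Theorem~\ref{th-approx} forces $H_S(k)$ and $H_{\Reg(S)}(k)$ to have the same asymptotic, which is precisely the reduction you carry out. Your sketch in fact supplies more detail than the paper itself, and the technical obstacle you flag (controlling lattice points in the boundary shell) is the standard one handled in the cited reference.
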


Finally we make an observation which will be used later in proof of the main result (Theorem \ref{th-main}).
\begin{Prop} \label{prop-cone-S'}
Let $S \subset \n \times \z^n$ be a non-negative semigroup and $C = C(S)$ the cone
associated to $S$. Let $C' \subset C$ be a convex cone of full dimension (centered at the origin)
and $S' = S \cap C'$ the subsemigroup consisting of all the points of $S$ contained in $C'$.
Then the cone $C(S')$ associated to $S'$ coincides with $C'$.
\end{Prop}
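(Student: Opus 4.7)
The plan is to prove the two inclusions separately. The inclusion $C(S') \subset C'$ is essentially immediate: $S' \subset C'$, and since we are reading $C'$ as a closed convex cone containing $S'$, it must contain the smallest such cone, namely $C(S')$.

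For the substantive reverse inclusion $C' \subset C(S')$, I observe first that $C(S')$ is closed and $\textup{int}(C')$ is dense in the full-dimensional cone $C'$, so it suffices to show $\textup{int}(C') \subset C(S')$. Fix a point $p \in \textup{int}(C')$. The key step is to construct a closed strongly convex, full-dimensional cone $C''$ with $p \in \textup{int}(C'')$ and $C'' \setminus \{0\} \subset \textup{int}(C')$, e.g.\ a narrow round cone around the ray $\r_{\geq 0} p$. Since $C'$ and $C$ are both full-dimensional and $C' \subset C$, we have $\textup{int}(C') \subset \textup{int}(C)$; hence $C'' \setminus \{0\} \subset \textup{int}(C)$, so $C''$ meets $\partial C$ only at the origin. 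The Approximation Theorem \ref{th-approx} applied to $S$ and $C''$ then yields $N>0$ such that every point of $G(S) \cap C''$ at distance greater than $N$ from the origin lies in $S$, and therefore in $S \cap C'' \subset S \cap C' = S'$.

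Next, since $C$ is full-dimensional we have $L(S) = \r^{n+1}$, and $G(S)$ is a lattice of full rank in $\r^{n+1}$. Consequently the rays through the lattice points of $G(S)$ are dense inside the full-dimensional cone $C''$, and I can extract a sequence $q_k \in G(S) \cap C''$ with $|q_k| \to \infty$ and $q_k/|q_k| \to p/|p|$. By the previous paragraph $q_k \in S'$ for all sufficiently large $k$. The rescaled vectors $(|p|/|q_k|)\, q_k$ lie on the rays $\r_{\geq 0} q_k \subset C(S')$ and converge to $p$, so $p \in C(S')$ by closedness of $C(S')$. Since $p$ was arbitrary in $\textup{int}(C')$, this completes the proof.

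The main obstacle I expect is the correct setup of the auxiliary cone $C''$: it must be simultaneously strongly convex (to invoke Theorem \ref{th-approx}), full-dimensional (so that rays of the lattice $G(S)$ are dense inside it), and contained in $\textup{int}(C') \cup \{0\}$ (so that it meets $\partial C$ only at the origin). Once this geometric setup is in place, the remainder is a direct combination of the approximation theorem with the standard density of a full-rank lattice in any full-dimensional cone.
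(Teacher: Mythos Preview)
Your proof is correct and follows essentially the same route as the paper's: both reduce the nontrivial inclusion $C' \subset C(S')$ to the Approximation Theorem~\ref{th-approx} applied to an auxiliary full-dimensional cone sitting inside $\textup{int}(C')\cup\{0\}$, together with the density of lattice directions in such a cone. The only differences are cosmetic---you argue directly (showing each $p \in \textup{int}(C')$ lies in $C(S')$) and make the strong convexity of the auxiliary cone and the passage from $\partial C'$ to $\partial C$ explicit, whereas the paper argues by contradiction and leaves these details implicit.
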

\begin{proof}
Clearly $C(S') \subset C'$. By contradiction suppose
$C(S')$ is not equal to $C'$. Then there is a convex cone $\tilde{C} \subset C'$ of full dimension
which intersects $C(S')$ and the boundary of $C'$
(in the topology of the subspace $L(S)$)
only at the origin. Since $\tilde{C}$ has full dimension it contains a rational point (with respect to the lattice
$\Lambda(S)$) which then implis that it contains a point in $\Lambda(S)$. Now applying Theorem \ref{th-approx} we see that
the convex cone $\tilde{C}$ should contain a point in $S'$ which contradicts that $C(S') \cap \tilde{C} = \emptyset.$
\end{proof}

In the rest of the paper we will deal with semigroups and convex polytopes naturally
associated to a reductive group $G$ and its representations.

\begin{Rem}
The proof of Theorem \ref{th-approx} relies on the proof of the special case when $S$ is a finitely generated semigroup. 
The semigroups appearing in this note turn out to be in fact finitely generated, although we will not use this fact.
\end{Rem}

\section{Main result} \label{sec-main}
Let $V$ be a finite dimensional $G$-module.
Define the set $S(V) \subset \n \times \Lambda^+$ by
$$S(V) = \{ (k, \lambda) \mid V_\lambda \text{ appears in } V^{\otimes k} \}.$$

If $v_\lambda$ and $v_\mu$ are highest weight vectors in $V^{\otimes k}$ and $V^{\otimes \ell}$
with weights $\lambda$ and $\mu$ respectively, then $v_\lambda \otimes v_\mu$ is a highest weight vector
in $V^{\otimes k + \ell}$ of weight $\lambda+\mu$. It follows that $S(V)$ is a semigroup with respect to addition.
Let $\Delta(V)$ denote the Newton-Okounkov body of the semigroup $S(V)$. In other words,
$$\Delta(V) = \overline{\conv(\bigcup_{k>0}\{ \lambda/k \mid V_\lambda \text{ appears in } V^{\otimes k}\}}).$$

Also let $A$ be the collection of $\gamma$ where $V_\gamma$ appears in $V$. 
\begin{Def}
The {\it weight polytope of $V$}
is defined by $P(V) = \conv\{w(\gamma) \mid w \in W,~ \gamma \in A\}$, i.e. the convex hull of the union of 
Weyl orbits of $\gamma \in A$. We will denote the intersection of $P(V)$ with the positive Weyl chamber $\Lambda^+_\r$ 
by $P^+(V)$ and call it the {\it moment polytope of $V$}.
\end{Def}

\begin{Th} \label{th-main}
$\Delta(V)$ coincides with the moment polytope $P^+(V)$.
\end{Th}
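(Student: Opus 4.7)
The plan is to prove Theorem \ref{th-main} by establishing both inclusions between $\Delta(V)$ and $P^+(V)$; equivalently, one can work at the level of the cone $C(S(V))$ and show it coincides with the cone over $P^+(V)$, then take the slice at $k=1$.

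For the easier inclusion $\Delta(V) \subset P^+(V)$, I would argue directly from the definitions. If $V_\lambda$ is a summand of $V^{\otimes k}$, then $\lambda$ occurs in particular as a weight of $V^{\otimes k}$, so $\lambda = \mu_1 + \cdots + \mu_k$ for weights $\mu_i$ of $V$. Since the weights of every irreducible summand $V_\gamma$ of $V$ lie in $\conv(W \cdot \gamma) \subset P(V)$, each $\mu_i$ lies in $P(V)$, and by convexity so does the average $\lambda/k$. Combined with dominance of $\lambda$, this places $\lambda/k$ in $P(V) \cap \Lambda^+_\r = P^+(V)$. Passing to closures yields the inclusion on the Newton-Okounkov body.

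For the reverse inclusion $P^+(V) \subset \Delta(V)$ I would invoke the PRV theorem. The strategy is to show every rational point $p \in P^+(V)$ belongs to $\Delta(V)$; density then gives the full inclusion. Using $P(V) = \conv(W \cdot A)$, write $p = (1/k) \sum_{i=1}^{k} w_i(\gamma_i)$ with $\gamma_i \in A$, $w_i \in W$, and $k$ large enough to clear denominators (repetitions among the pairs $(\gamma_i, w_i)$ are permitted). Since $p \in \Lambda^+_\r$, the sum $kp = \sum_{i=1}^{k} w_i(\gamma_i)$ is dominant, and the task reduces to showing that $V_{kp}$ appears as a summand of $V_{\gamma_1} \otimes \cdots \otimes V_{\gamma_k}$, which is itself a summand of $V^{\otimes k}$; this produces $(k, kp) \in S(V)$ and hence $p \in \Delta(V)$.

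The main obstacle lies in this last step, since the classical PRV theorem supplies only two-factor information: for any $w \in W$, the dominant representative of $\lambda + w(\mu)$ is a summand of $V_\lambda \otimes V_\mu$. I would handle the $k$-fold situation by induction on the number of factors, with the strengthened hypothesis that whenever $\lambda_i$ lies in the Weyl orbit of a dominant weight $\gamma_i$ and $\sum \lambda_i$ is dominant, $V_{\sum \lambda_i}$ is a summand of $V_{\gamma_1} \otimes \cdots \otimes V_{\gamma_k}$. The inductive step fuses the first two factors: apply PRV with the Weyl element $w_1^{-1} w_2$ to extract a summand $V_\nu$ of $V_{\gamma_1} \otimes V_{\gamma_2}$ whose highest weight $\nu$ is the dominant representative of $\lambda_1 + \lambda_2$, and then apply the inductive hypothesis to $V_\nu \otimes V_{\gamma_3} \otimes \cdots \otimes V_{\gamma_k}$, treating $\lambda_1 + \lambda_2 \in W \cdot \nu$ as the first weight. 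The resulting summand is $V_{kp}$, closing the induction and completing the proof.
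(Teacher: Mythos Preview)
Your proof is correct and takes a genuinely different route from the paper's. The paper introduces the $W$-saturated set $\tilde{S}(V) = \{(k, w(\lambda)) \mid w \in W,\ V_\lambda \text{ appears in } V^{\otimes k}\}$, uses the two-factor PRV theorem once to show $\tilde{S}(V)$ is a semigroup, observes (via Lemma~\ref{lem-product-irr-rep-weight-polytope}) that $\Delta(\tilde{S}(V)) = P(V)$, and then invokes the Approximation Theorem through Proposition~\ref{prop-cone-S'} to conclude that intersecting with the positive Weyl chamber gives $\Delta(S(V)) = P^+(V)$. Your argument bypasses both the auxiliary semigroup and the Approximation Theorem: the inclusion $\Delta(V) \subset P^+(V)$ is handled by the same weight-sum observation underlying Lemma~\ref{lem-product-irr-rep-weight-polytope}, while for $P^+(V) \subset \Delta(V)$ you iterate PRV directly, fusing two factors at a time, to place every rational point of $P^+(V)$ into $\Delta(V)$. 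Your approach is more elementary and self-contained, and makes the constructive content of the result visible; the paper's approach has the virtue of slotting into the general Newton--Okounkov machinery of Section~\ref{sec-semigroup}, and only needs PRV in its bare two-factor form, with the iteration absorbed into the semigroup closure and the Approximation Theorem.
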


We will need the following well-known fact:
\begin{Lem} \label{lem-product-irr-rep-weight-polytope}
Let $\lambda_1, \lambda_2$ be dominant weights and let $V_\gamma$ appear in
$V_{\lambda_1} \otimes V_{\lambda_2}$. Then $\gamma = \lambda_1 + \lambda_2 - \sum_{\alpha \in R^+} c_\alpha \alpha$
where $c_\alpha \geq 0$ ($R^+$ denotes the set of positive roots). From this it follows that
$\gamma$ belongs to the convex hull of the Weyl orbit of $\lambda_1 + \lambda_2$.
\end{Lem}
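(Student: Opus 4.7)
The plan is to dispatch both assertions via elementary weight-theoretic bookkeeping and then invoke a classical convex-hull description.

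First I would prove the expression $\gamma = \lambda_1 + \lambda_2 - \sum_{\alpha \in R^+} c_\alpha \alpha$ with $c_\alpha \geq 0$. The key input is the standard fact that every weight of the irreducible module $V_{\lambda_i}$ has the form $\lambda_i - \sum_{\alpha \in R^+} c_\alpha^{(i)} \alpha$ with non-negative coefficients, a direct consequence of $v_{\lambda_i}$ being a highest weight vector and the rest of $V_{\lambda_i}$ being obtained by repeatedly applying negative root vectors. Since the weights of $V_{\lambda_1} \otimes V_{\lambda_2}$ are precisely the sums of weights of the two factors, and since $\gamma$ occurs as a weight of this tensor product (being the highest weight of some irreducible summand), the desired expression follows by adding the two expansions.

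Next I would deduce that $\gamma \in P^+(V_{\lambda_1+\lambda_2})$. Here I plan to invoke the classical theorem: for a dominant weight $\lambda$ and a dominant weight $\mu$, one has $\mu \in \conv(W \cdot \lambda)$ if and only if $\lambda - \mu$ lies in the non-negative real cone generated by the positive roots (equivalently, by the simple roots). Since $\gamma$ is dominant (being the highest weight of an irreducible summand) and $\lambda_1 + \lambda_2 - \gamma$ is such a non-negative combination by the first part, we get $\gamma \in \conv(W(\lambda_1+\lambda_2)) \cap \Lambda^+_\r$. This is exactly $P^+(V_{\lambda_1+\lambda_2})$ because $V_{\lambda_1+\lambda_2}$ is irreducible, so its weight polytope is the convex hull of the single Weyl orbit $W(\lambda_1+\lambda_2)$.

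The only conceptual ingredient beyond standard weight calculus is the classical characterization of $\conv(W\lambda) \cap \Lambda^+_\r$ invoked in the second paragraph; I would simply cite it (for instance from any standard reference on reductive Lie algebras) rather than reproving it. Given this, there is no real obstacle: the lemma is labelled ``well-known'' and the reduction of the second statement to this standard fact is immediate once the first expression for $\gamma$ is in hand.
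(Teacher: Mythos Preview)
Your argument is correct and is exactly the standard justification one has in mind when labeling this fact ``well-known''; both the expression for $\gamma$ via weights of the tensor factors and the appeal to the characterization of $\conv(W\lambda)\cap\Lambda^+_\r$ in terms of the dominance order are the expected steps. The paper itself supplies no proof of this lemma---it is simply stated as a well-known fact---so there is nothing further to compare.
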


Our tool to prove Theorem \ref{th-main} is the well-known PRV-Kumar theorem regarding the
tensor product of two irreducible representations. It was conjectured by
K. Parthasarathy, R. Ranga Rao and V. Varadarajan in \cite{PRV}.
Later it was proved by S. Kumar in \cite{Kumar}. We briefly recall its statement.

\begin{Th}[PRV-Kumar theorem] \label{th-PRV}
Let $\lambda_1, \lambda_2 \in \Lambda^+$ be two dominant weights. Suppose for
two Weyl group elements $w_1, w_2 \in W$ we have $\gamma = w_1(\lambda_1) + w_2(\lambda_2)$ is a dominant weight.
Then $V_\gamma$ appears in the decomposition of the tensor product $V_{\lambda_1} \otimes V_{\lambda_2}$ into
irreducible representations.
\end{Th}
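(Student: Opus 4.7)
The plan is to exhibit a copy of $V_\gamma$ inside $V_{\lambda_1} \otimes V_{\lambda_2}$, which by Schur's lemma is equivalent to producing a nonzero $G$-invariant vector in $V_{\lambda_1} \otimes V_{\lambda_2} \otimes V_{\gamma^*}$, where $\gamma^* = -w_0 \gamma$. A natural starting point is the tensor of extremal weight vectors $v_{w_1 \lambda_1} \otimes v_{w_2 \lambda_2} \in V_{\lambda_1} \otimes V_{\lambda_2}$: this vector has weight exactly $\gamma$, but is in general \emph{not} a highest weight vector, since neither $w_i \lambda_i$ is required to be dominant. The heart of the problem is to show that, despite this, the projection of $v_{w_1 \lambda_1} \otimes v_{w_2 \lambda_2}$ onto the $V_\gamma$-isotypic component is nonzero, so that $V_\gamma$ must actually occur.

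My approach would follow Kumar's geometric proof. By Borel-Weil, $V_\mu^* \cong H^0(G/B, \L_\mu)$, so the multiplicity in question can be computed as the dimension of a space of $G$-equivariant sections on $G/B \times G/B$ of a line bundle built from $\lambda_1, \lambda_2, \gamma$. To produce such a section, I would use the Schubert varieties $X_{w_1^{-1}}, X_{w_2^{-1}} \subset G/B$ and their $T$-fixed basepoints $w_i^{-1} B$, whose cotangent weights recover $w_i \lambda_i$. Working with the Demazure modules $V_w(\lambda) := U(\b) v_{w\lambda} \subset V_\lambda$, one applies the Demazure character formula to get a combinatorial grip on their characters, and then uses cohomology vanishing (Kempf's theorem, or equivalently vanishing on Bott--Samelson resolutions) to promote an explicit nonvanishing local section at the basepoint into a genuine global highest weight vector of weight $\gamma$. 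The induction is on the lengths $\ell(w_1)$ and $\ell(w_2)$, reducing at each step via a simple reflection to a rank-one $\SL_2$ calculation that is handled directly.

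The main obstacle is the nonvanishing itself: character considerations alone cannot decide whether $V_\gamma$ appears, since cancellations in the Weyl character formula could in principle wipe out the $V_\gamma$-isotypic component even when $\gamma$ visibly lies in the support of the tensor product. Kumar's resolution of this obstacle exploits the normality of Schubert varieties and the vanishing of higher cohomology of dominant line bundles on them, so that a section constructed on the boundary extends inward without acquiring a zero at the basepoint; this geometric input is the genuinely hard ingredient and is not formal. An alternative route I would keep in mind is Littelmann's path model, where the PRV component is produced combinatorially by concatenating the straight-line paths for $w_1 \lambda_1$ and $w_2 \lambda_2$ and verifying that dominance of $\gamma$ keeps the concatenated path in $\Lambda^+_\r$, forcing the corresponding summand $V_\gamma$ to appear.
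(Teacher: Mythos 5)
First, a point of calibration: the paper does not prove this statement. It quotes the PRV conjecture as a known theorem, attributing the conjecture to Parthasarathy--Ranga Rao--Varadarajan \cite{PRV} and the proof to Kumar \cite{Kumar}, and then uses it as a black box in Corollary \ref{cor-PRV}. So there is no in-paper argument to compare yours against; the relevant comparison is with Kumar's proof itself, and measured against that your proposal is a correct road map but not a proof. You identify the right objects (Demazure modules $V_w(\lambda) = U(\b)v_{w\lambda}$, cohomology of line bundles on Schubert varieties, induction on $\ell(w_1), \ell(w_2)$ with $\SL_2$ reductions), and you correctly isolate where the real difficulty lives: character-level bookkeeping cannot exclude cancellation, so the whole content is the nonvanishing of the projection of $v_{w_1(\lambda_1)} \otimes v_{w_2(\lambda_2)}$ onto the $V_\gamma$-isotypic component. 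But every step that carries that content is invoked by name rather than executed: the inductive scheme is not set up, the rank-one base case is not computed, and the passage from ``higher cohomology vanishing plus normality of Schubert varieties'' to ``the constructed section does not vanish at the basepoint'' --- the part of Kumar's argument that occupies most of \cite{Kumar} --- is here a single sentence.

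Two smaller inaccuracies are worth flagging. The remark that the ``cotangent weights recover $w_i(\lambda_i)$'' misdescribes the role of the $T$-fixed points $w_i^{-1}B$; what one actually uses is that the fibre of the relevant line bundle at $w_i^{-1}B$ is the weight line spanned by $v_{w_i(\lambda_i)}$, i.e.\ that this extremal vector generates the Demazure module. And the Littelmann alternative is misstated: the path model decomposes $V_{\lambda_1} \otimes V_{\lambda_2}$ via concatenations $\pi_{\lambda_1} * \eta$ where $\eta$ ranges over L--S paths of shape $\lambda_2$ such that the concatenation stays in the dominant chamber; producing such an $\eta$ ending at $\gamma - \lambda_1$ is a genuine combinatorial lemma, not a matter of concatenating the straight-line segments to $w_1(\lambda_1)$ and $w_2(\lambda_2)$ (that naive concatenation generally leaves $\Lambda^+_\r$ even when $\gamma$ is dominant). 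In short, your outline would serve as an introduction to a proof, but it does not prove the theorem; for the purposes of this paper the honest move is the one the author makes, namely to cite \cite{Kumar}.
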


Define the set $\tilde{S}(V) \subset \n \times \Lambda$ by:
$$\tilde{S}(V) = \{ (k, w(\lambda)) \mid w \in W,~ V_\lambda \text{ appear in } V^{\otimes k}\}.$$
Roughly, speaking $\tilde{S}(V)$ is the union of $W$-otbits of $\lambda$ for which $V_\lambda$ appears in 
some tensor power $V^{\otimes k}$. Notice that $S(V) = \tilde{S}(V) \cap (\n \times \Lambda^+)$.
The following is a straight forward corollary of Theorem \ref{th-PRV}
\begin{Cor} \label{cor-PRV}
1) $\tilde{S}(V)$ is a semigroup. 2) The convex body $\Delta(\tilde{S}(V))$ associated to 
this semigroup coincides with $P(V)$. 
\end{Cor}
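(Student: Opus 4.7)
The plan is to prove the two parts separately, using the PRV theorem for part (1) and a direct two-inclusion argument for part (2).

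For part (1), suppose $(k_1, w_1(\lambda_1))$ and $(k_2, w_2(\lambda_2))$ lie in $\tilde{S}(V)$, where $V_{\lambda_1}$ appears in $V^{\otimes k_1}$ and $V_{\lambda_2}$ appears in $V^{\otimes k_2}$. Set $\nu = w_1(\lambda_1) + w_2(\lambda_2)$ and choose $w \in W$ so that $w(\nu)$ is dominant. Then
$$w(\nu) = (ww_1)(\lambda_1) + (ww_2)(\lambda_2)$$
is a dominant weight of the form required by Theorem \ref{th-PRV}, so $V_{w(\nu)}$ appears in $V_{\lambda_1} \otimes V_{\lambda_2}$, and hence in $V^{\otimes k_1 + k_2}$. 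Since $\nu = w^{-1}(w(\nu))$, this shows $(k_1 + k_2, \nu) \in \tilde{S}(V)$, establishing closure under addition.

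For part (2), I would prove the two inclusions separately. For $P(V) \subseteq \Delta(\tilde{S}(V))$: every $\gamma \in A$ satisfies $(1, w(\gamma)) \in \tilde{S}(V)$ for each $w \in W$, since $V_\gamma$ appears in $V^{\otimes 1}$; thus all the generators $w(\gamma)$ of $P(V)$ lie in the slice of $\tilde{S}(V)$ at level $1$, and the convex hull gives $P(V) \subseteq \Delta(\tilde{S}(V))$. For the reverse inclusion, note that if $V_\lambda$ appears in $V^{\otimes k}$ then $\lambda$ is in particular a weight of $V^{\otimes k}$, whose weight polytope is $kP(V)$; since $P(V)$ is $W$-invariant, every point of the form $w(\lambda)/k$ with $(k, w(\lambda)) \in \tilde{S}(V)$ lies in $P(V)$, and taking closed convex hulls yields $\Delta(\tilde{S}(V)) \subseteq P(V)$.

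The only nontrivial step is the semigroup property in part (1), and this is precisely what PRV was designed to deliver: without it, the sum $w_1(\lambda_1) + w_2(\lambda_2)$ need not itself be a Weyl translate of a highest weight of any tensor power. Part (2) is then essentially immediate from the definition of $P(V)$ combined with the fact that the weight polytope of $V^{\otimes k}$ scales linearly in $k$.
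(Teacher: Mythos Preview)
Your proof is correct and follows essentially the same approach as the paper: part (1) is identical to the paper's use of the PRV theorem, and for part (2) the paper invokes Lemma \ref{lem-product-irr-rep-weight-polytope} to assert $\conv(\tilde{S}(V)_k) = kP(V)$, which is exactly the content of your two-inclusion argument via the linear scaling of the weight polytope. The only difference is that you spell out the two inclusions explicitly rather than citing the lemma, but the underlying reasoning is the same.
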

\begin{proof}
1) Let $(k, w_1(\lambda_1))$, $(\ell, w_2(\lambda_2))$ be two elements in $\tilde{S}(V)$.
We can write $w_1(\lambda_1) + w_2(\lambda_2)$ as $w(\lambda)$ for some $\lambda \in \Lambda^+$, $w \in W$.
By Theorem \ref{th-PRV}, $V_\lambda$ appears in $V_{\lambda_1} \otimes V_{\lambda_2}$ and hence 
it appears in $V^{\otimes \ell+k}$. This shows that $(\ell+k, w_1(\lambda_1)+w_2(\lambda_2)) = 
(\ell+k, w(\lambda))$ belongs to $\tilde{S}(V)$ which proves 1). 2) 
Since $\tilde{S}(V)$ is a semigroup and $P(V)$ is by definition the convex hull of $\tilde{S}(V)_1$, it follows that 
$kP(V)$ is contained in the convex hull of $\tilde{S}(V)_k$.  
On the other hand, by Lemma \ref{lem-product-irr-rep-weight-polytope}, for any integer $k>0$, 
the convex hull of $\tilde{S}(V)_k$ is contained in
$kP(V)$ and hence $\Delta(\tilde{S}(V))$ coincides with $P(V)$. 
\end{proof}

\begin{proof}[Proof of Theorem \ref{th-main}]
From Proposition \ref{prop-cone-S'}, the convex body $\Delta(V)$ associated 
to the semigroup $S(V) \subset \tilde{S}(V)$ is just the intersection of 
$\Delta(\tilde{S}(V))$ with $\Lambda^+_\r$. By Corollary \ref{cor-PRV}(2)
we know that $\Delta(\tilde{S}(V))$ is the weight polytope $P(V)$ which finishes the proof.
\end{proof}

\begin{Cor} \label{cor-main}
Let $H_V(k)$ be the number of $\lambda$ such that $V_\lambda$ appears in $V^{\otimes k}$.
Then $H_V(k)$ grows of degree $q = \dim P^+(V)$.  That is, the limit
$$a_q = \lim_{k \to \infty} H_V(k) / k^q$$ exists and is non-zero. Moreover, $a_q$
is equal to $\Vol_q(P^+(V))$, where
volume is the Lebesgue measure in $\Lambda_\r$ normalized with respect to the
lattice $\Lambda(S(V)) \subset \Lambda$.
\end{Cor}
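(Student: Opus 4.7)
The plan is to deduce this corollary as a direct consequence of the general asymptotic result Theorem \ref{th-asymp-H_S-vol-Delta} applied to the semigroup $S = S(V)$, once we have identified its Newton--Okounkov body via Theorem \ref{th-main}. So the work reduces to verifying the hypotheses of Theorem \ref{th-asymp-H_S-vol-Delta} for $S(V)$ and then translating the conclusion through the identification $\Delta(S(V)) = P^+(V)$.

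First, I would check that $S(V)$ is a strongly non-negative semigroup in the sense of Section \ref{sec-semigroup}. Non-negativity and $S_1 \neq \emptyset$ are immediate: any highest weight $\lambda$ appearing in $V$ gives an element $(1,\lambda) \in S(V)$. For strong non-negativity one needs $C(S(V)) \cap \pi^{-1}(0) = \{0\}$, which holds because every weight appearing in $V^{\otimes k}$ has norm bounded by $k$ times the maximum norm of a weight of $V$; hence no nonzero point of $C(S(V))$ sits at level $k = 0$. In particular $\Delta(S(V))$ is a convex body.

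Next, by Theorem \ref{th-asymp-H_S-vol-Delta} applied to $S(V)$, the Hilbert function $H_{S(V)}(k)$, which by definition is exactly $H_V(k)$, satisfies
\[
a_q = \lim_{k \to \infty} H_V(k)/k^q
\]
with $q = \dim \Delta(S(V))$, the limit exists and is non-zero, and $a_q = \Vol_q(\Delta(S(V)))$ with respect to the lattice $\Lambda(S(V))$. Theorem \ref{th-main} identifies $\Delta(S(V))$ with $P^+(V)$, so $q = \dim P^+(V)$ and $a_q = \Vol_q(P^+(V))$, yielding the stated formula.

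There is no real obstacle here; the statement is essentially a repackaging of Theorem \ref{th-asymp-H_S-vol-Delta} via Theorem \ref{th-main}. The only small point that deserves care is the normalization of the volume: one must record that the lattice in play is $\Lambda(S(V)) = G(S(V)) \cap \pi^{-1}(0)$ viewed as a sublattice of $\Lambda$, since this is the lattice used in Theorem \ref{th-asymp-H_S-vol-Delta} and it need not equal the full weight lattice of $G$.
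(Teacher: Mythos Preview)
Your argument is correct and matches the paper's approach exactly: the paper's own proof is the single line ``Follows directly from Theorem \ref{th-main} and Theorem \ref{th-asymp-H_S-vol-Delta},'' and your write-up simply supplies the implicit verification that $S(V)$ is strongly non-negative together with the identification $H_{S(V)} = H_V$ and $\Delta(S(V)) = P^+(V)$.
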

\begin{proof}
Follows directly from Theorem \ref{th-main} and Theorem \ref{th-asymp-H_S-vol-Delta}.
\end{proof}

\begin{Ex} \label{ex-SL_2}
Perhaps the simplest case of Theorem \ref{th-main} and Corollary \ref{cor-main} is $G = \SL(2, \c)$. One knows that the irreducible 
representations of $\SL(2, \c)$ are enumerated by $n \in \z_{\geq 0}$ as $V_n = \Sym^n(\c^2)$, where $\SL(2, \c)$ acts on $\c^2$ in the usual way.
It is well-known that for $n, m \geq 0$:
\begin{equation} \label{equ-SL_2-tensor-product}
V_n \otimes V_m = V_{|n-m|} \oplus V_{|n-m+2|} \oplus \cdots \oplus V_{n+m}.
\end{equation}
Let $V = m_1V_{n_1} \oplus \cdots \oplus m_rV_{n_r}$ be the decomposition of a finite dimensional representation of $\SL(2, \c)$ into irreducible representations
where $0 \leq n_1 < \cdots < n_r$. 
From \eqref{equ-SL_2-tensor-product} we see that $\Delta(V) = [0, n_r]$. Corollary \ref{cor-main} then states that the number of irreducible representations
appearing in $V^{\otimes k}$ is asymptotically equal to $kn_r$. For $G = \SL(n, \c)$ the decomposition of tensor product of two irreducible representations $V_\lambda \otimes V_\gamma$ into irreducible representations 
is more complicated. Exactly what irreducible representations appear in $V_\lambda \otimes V_\gamma$ 
is related to the so-called Horn's conjecture/theorem (see for example \cite[Section 3]{Fulton}). 
\end{Ex}

\section{Relation with moment polytope of group actions} \label{sec-moment}
In this section we see how the moment polytope $P^+(V)$ appears as a moment polytope for
the action of $G \times G$ on $G$.

Let $V$ be a finite dimensional $G$-module, and $X \subset \p(V)$ an
irreducible closed $G$-invariant subvariety. Let $R = \bigoplus_{k \geq 0} R_k$ denote the
homogeneous coordinate ring of $X$. It is a graded $G$-algebra. Following Brion, one defines the
{\it moment polytope $\Delta(X)$} to be:
$$\Delta(X) = \overline{\conv(\bigcup_{k > 0}\{\lambda / k \mid V_\lambda \text{ appears in } R_k \})}.$$
One shows that $\Delta(X) \subset \Lambda^+_\r$ is a polytope (see \cite{Brion-moment}).
Moreover, when $\k = \c$ and $X$ is smooth, the polytope $\Delta(X)$ can be identified with the moment polytope of
$X$ regarded as a Hamiltonian space for the action of a maximal compact subgroup $K$ of $G$ and
the symplectic structure induced from the projective space (see for example \cite{G-S}).

Let $G \times G$ act on $G$ via multiplication from left and right. Let $\k[G]$ denote the
algebra of regular functions on the variety $G$. It is a rational $(G \times G)$-module.
It is well-known that for each dominant weight $\lambda$, the $(\lambda^*, \lambda)$-isotypic component
$\k[G]_{(\lambda^*, \lambda)}$
is isomorphic to $V_{\lambda^*} \otimes V_{\lambda}$. Moreover, any isotypic component of
$\k[G]$ is of this form for some $\lambda$.
In fact any $(G \times G)$-isotypic component
$\k[G]_{(\lambda^*, \lambda)}$ in $\k[G]$ is the linear span of the matrix entries corresponding to
the representation of $G$ in $V_\lambda$ (see \cite{Kraft}). Now if
$\lambda_1, \lambda_2 \in \Lambda^+$ are two dominant weights, the product
$\k[G]_{(\lambda^*_1,\lambda_1)}\k[G]_{(\lambda^*_2,\lambda_2)}$ is the
linear span of the matrix entries corresponding to $V_{\lambda_1} \otimes V_{\lambda_2}$. This
shows that we have the following decomposition for the product of isotypic components in the
algebra $\k[G]$:
\begin{equation} \label{equ-k[G]-product-isotypic}
\k[G]_{(\lambda_1^*, \lambda_1)} \k[G]_{(\lambda_2^*, \lambda_2)} =
\bigoplus_{\gamma \in \chi(\lambda_1, \lambda_2)} V_{\gamma^*} \otimes V_{\gamma},
\end{equation}
where $\chi(\lambda_1, \lambda_2)$ denotes the collection of all
$\gamma \in \Lambda^+$ for which $V_\gamma$ appears in $V_{\lambda_1} \otimes V_{\lambda_2}$.

Now let $\pi: G \to \GL(V)$ be a finite dimensional representation.
Then $\End(V)$ is naturally a $(G \times G)$-module where $G \times G$ acts via $\pi$ by
multiplication from left and right. Let $\tilde{\pi}: G \to \p(\End(V))$ be the induced map to
projective space and let $X$ be the closure of the image of $G$ in $\p(\End(V))$.
It is a $(G \times G)$-invariant closed irreducible subvariety.

From (\ref{equ-k[G]-product-isotypic}) one can see the following:
\begin{Prop} \label{prop-homog-coor-G-tensor-powers}
Let $R = \bigoplus_{k} R_k$ denote the homogeneous coordinate ring of $X$ in $\p(\End(V))$.
Then for $k > 0$ we have: $V_{(\lambda^*, \lambda)}$ appears in $R_k$ if and only if $V_{\lambda}$ 
appears in $V^{\otimes k}$. It follows that, under the projection on the second factor, 
$\Delta(X) \subset \Lambda^+_\r \times \Lambda^+_\r$ identifies with $P^+(V)$.
\end{Prop}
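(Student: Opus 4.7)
The plan is to identify the homogeneous coordinate ring $R$ with a specific graded subalgebra of $\k[G]$ and then read off the $G \times G$-isotypic decomposition of each $R_k$ by iterating formula (\ref{equ-k[G]-product-isotypic}).

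First I would observe that the composition $\pi\colon G \to \GL(V) \hookrightarrow \End(V)$ is $G \times G$-equivariant for the left-right multiplication action on $\End(V)$, so pullback of polynomial functions yields a $G \times G$-equivariant graded ring homomorphism $\varphi\colon \bigoplus_k \Sym^k(\End(V)^*) \to \k[G]$. Its kernel in each degree is the corresponding piece of the homogeneous ideal of the affine cone over $X$, hence $\varphi$ induces an injective $G \times G$-equivariant graded map $R \hookrightarrow \k[G]$. Under this identification $R_1$ is the linear span of the matrix coefficients of $\pi$, and by the decomposition of $\k[G]$ recalled in Section~\ref{sec-moment} this equals $\bigoplus_{\mu \in A} \k[G]_{(\mu^*, \mu)}$, where $A$ is the set of highest weights in $V$.

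Next I would induct on $k$. Since $R$ is generated in degree one, $R_{k+1} = R_k \cdot R_1$ inside $\k[G]$. Assuming inductively that $R_k = \bigoplus_{\lambda} \k[G]_{(\lambda^*, \lambda)}$ with $\lambda$ running over the highest weights of $V^{\otimes k}$, the product formula (\ref{equ-k[G]-product-isotypic}) gives
$$R_{k+1} = \sum_{\lambda,\, \mu} \bigoplus_{\gamma \in \chi(\lambda, \mu)} V_{\gamma^*} \otimes V_\gamma,$$
with $\lambda$ ranging over highest weights of $V^{\otimes k}$ and $\mu \in A$. The $\gamma$ that arise are exactly the highest weights of $V^{\otimes k} \otimes V = V^{\otimes(k+1)}$, closing the induction and yielding the first assertion. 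The second assertion follows by projecting $\Delta(X) \subset \Lambda^+_\r \times \Lambda^+_\r$ onto the second factor, which gives $\Delta(V) = P^+(V)$ by Theorem \ref{th-main}, while the first factor is recovered from the second via the involution $\lambda \mapsto \lambda^*$.

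I expect the main (but minor) obstacle to be the ring-theoretic setup: one needs to verify that $\ker\varphi$ in degree $k$ coincides with the degree-$k$ piece of the homogeneous ideal of $X$, so that $R_k$ genuinely embeds into $\k[G]$, and that $R_k = R_1^k$ holds as subspaces of $\k[G]$ (so that the induction actually runs). Both should follow routinely from $\pi(G)$ being dense in the affine cone over $X$, which in turn follows from irreducibility of $G$.
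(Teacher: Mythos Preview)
Your argument is correct and is exactly the route the paper has in mind: the paper gives no proof beyond the sentence ``From (\ref{equ-k[G]-product-isotypic}) one can see the following,'' and your inductive computation of $R_k$ inside $\k[G]$ via that product formula is the intended unpacking.

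One small correction to your closing remark: $\pi(G)$ is generally \emph{not} dense in the affine cone over $X$ (take $G=\SL_2$ with $V$ the standard representation: then $X=\p^3$ and the cone is all of $\End(V)$, while $\pi(G)$ is the hypersurface $\det=1$). What makes $\ker\varphi$ agree with the homogeneous ideal in each degree is rather that a \emph{homogeneous} form vanishes on the cone iff it vanishes on $\pi(G)$, since $\k^*\cdot\pi(G)$ is dense in the cone and homogeneous forms are $\k^*$-eigenfunctions. With that adjustment your verification goes through.
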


\begin{Rem}
The relation between the moment polytope of $X$ (i.e. a group compactification) and 
the polytope $P^+(V)$ has also been shown in \cite{Kazarnovskii} using methods from symplectic geometry.
\end{Rem}


\noindent{\bf Acknowledgement:} the author would like to thank Askold Khovanskii, Alexander Yong and Kevin Purbhoo for helpful discussions,
as well as Shrawan Kumar for helpful email correspondence.\\

\end{document}